\documentclass[a4paper,11pt]{amsart}

\author{Giles Gardam}
\title{Non-trivial units of complex group rings}

\usepackage[T1]{fontenc}
\usepackage[utf8]{inputenc}
\usepackage{amsfonts}
\usepackage{amsmath}
\usepackage{amssymb}
\usepackage{amsthm}
\usepackage{mathpazo}
\usepackage{mathtools}

\usepackage{hyperref}

\theoremstyle{plain}
\newtheorem{thm}{Theorem}
\newtheorem*{cor}{Corollary}

\theoremstyle{definition}
\newtheorem{rmk}{Remark}

\newcommand{\C}{\mathbb{C}}
\newcommand{\F}{\mathbb{F}}
\newcommand{\R}{\mathbb{R}}
\newcommand{\Q}{\mathbb{Q}}
\newcommand{\Z}{\mathbb{Z}}
\newcommand{\gp}[2]{\langle \, #1 \, | \, #2 \, \rangle}
\newcommand{\gen}[1]{\langle #1 \rangle}

\DeclareMathOperator{\supp}{supp}

\allowdisplaybreaks

\address{Mathematisches Institut, Universität Bonn, Endenicher Allee 60, 53115 Bonn, Germany}
\email{gardam@math.uni-bonn.de}
\keywords{Group rings, unit conjecture}
\subjclass[2020]{20C07 (16S34, 16U60)}

\begin{document}

\begin{abstract}
    The Kaplansky unit conjecture for group rings is false in characteristic~zero.
\end{abstract}

\maketitle

\section{Introduction}

Let $G$ be a torsion-free group and $K$ be a field.
The question of whether the group ring $K[G]$ can have any units other than the \emph{trivial units}, i.e.\ the non-zero scalar multiples of group elements, dates back to Higman's thesis \cite[p.~77]{Higman40} and is generally known as the Kaplansky unit conjecture.
An important consequence of a given $K[G]$ satisfying the conjecture is that it has no zero divisors \cite[Lemma 13.1.2]{Passman85}.

Once a counterexample to the unit conjecture was given in characteristic~$2$ \cite{Gardam21} and then generalized to arbitrary positive characteristic \cite{Murray21}, the natural question was whether this phenomenon exists in characteristic~$0$ or is simply an accident of positive characteristic; the dependence on the Frobenius endomorphism in \cite{Murray21} strongly hinted at the latter.
The characteristic~$0$ case is however the most interesting: the topological motivation such as underlies Higman's thesis is focussed on the integral group ring $\Z[G]$ and in an analytic setting such as operator algebras one generally restricts attention to $K = \C$.
For instance, one way to give a counterexample to the Atiyah conjecture on integrality of $L^2$-Betti numbers would be to find $G$ such that $\C[G]$ has zero divisors, but this necessitates $\C[G]$ having non-trivial units.
Moreover, a counterexample in characteristic~$0$ necessarily gives a counterexample in characteristic~$p$ for all but finitely many~$p$.

\section{The counterexample}

\begin{thm}
    \label{thm:nontrivial_unit}
    Let $P$ be the torsion-free group defined by the presentation $\gp{a, b}{b^{-1} a^2 b = a^{-2}, \, a^{-1} b^2 a = b^{-2}}$.
    Then $\C[P]$ has non-trivial units.
    For example, set $x = a^2, y = b^2, z = (ab)^2$, let $\zeta_8$ be a primitive 8th root of unity and let $i = \zeta_8^2$.
    Then
    \begin{align*}
        1 &+ i \big(x - x^{-1} - y + y^{-1} \big) z^{-1} \\
        &+ \zeta_8 \big(-i x^{-1} + 1 - x^{-1} y^{-1} z + i y z \big) a \\
        &+ \zeta_8 \big( x^{-1} y^{-1} z - i x z + i y^{-1} z^2 - z^2 \big) b \\
        &+ i \big( -i x^{-1} - i y - 1 + x^{-1} y \big. \\
          &\phantom{+i\big(}+ \big. i x^{-1} z^{-1} + i y z^{-1} - z^{-1} + x^{-1} y z^{-1} \big) z^{-1} ab
    \end{align*}
    is a non-trivial unit in $\C[P]$.
\end{thm}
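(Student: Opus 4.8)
The plan is to realise $\C[P]$ as a crossed product over a Laurent polynomial ring in three variables, produce an explicit inverse for $u$, and reduce the identity $uv=1$ to a small number of polynomial identities.

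First I would record the relevant structure of $P$. Setting $x=a^2$, $y=b^2$, $z=(ab)^2$, the subgroup $N=\gen{x,y,z}$ is normal in $P$, free abelian of rank three, with $P/N\cong(\Z/2)^2$ generated by the images of $a$ and $b$; thus $\{1,a,b,ab\}$ is a transversal and $\C[P]=R\oplus Ra\oplus Rb\oplus R(ab)$ as a left $R$-module, where $R=\C[N]=\C[x^{\pm1},y^{\pm1},z^{\pm1}]$. These are standard facts about $P$ (it is the fundamental group of the Hantzsche--Wendt flat $3$-manifold, whence also the torsion-freeness asserted in the statement). The conjugation action of $P/N$ on $R$ is by diagonal sign changes: $a$ fixes $x$ and inverts $y$ and $z$; $b$ fixes $y$ and inverts $x$ and $z$; hence $ab$ fixes $z$ and inverts $x$ and $y$. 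Multiplication in $\C[P]$ is then determined by this action together with the values $a^2=x$, $b^2=y$, $(ab)^2=z$ and $ba=(x^{-1}yz^{-1})(ab)$, all of which follow from the two defining relations.

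Next, writing $u=p_1+p_a\,a+p_b\,b+p_{ab}\,(ab)$ by reading the four coefficients $p_\bullet\in R$ off the statement, I would exhibit an explicit candidate $v=q_1+q_a\,a+q_b\,b+q_{ab}\,(ab)$ with $q_\bullet\in R$ and verify $uv=1$. Expanding the product with the multiplication rules above and grouping the result by coset turns $uv=1$ into four Laurent polynomial identities in $R$: the $N$-part reads $p_1q_1 + x\,p_a\,({}^{a}q_a) + y\,p_b\,({}^{b}q_b) + z\,p_{ab}\,({}^{ab}q_{ab}) = 1$, and the $a$-, $b$- and $ab$-parts each read as a similar expression equal to $0$. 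Each identity is a bounded computation --- one matches up finitely many monomials, with coefficients in $\Z[\zeta_8]$ --- that can be carried out by hand and confirmed by computer.

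Finally, once $uv=1$ is verified, $u$ is a genuine two-sided unit: $\C[P]$ is free of rank four over the Noetherian domain $R$, hence embeds into $M_4(\C(x,y,z))$, which is directly finite, so a one-sided inverse is automatically two-sided (alternatively one may quote Kaplansky's theorem that $\C[G]$ is directly finite in characteristic zero). And $u$ is non-trivial because its support contains at least two distinct group elements with nonzero coefficient, namely $1$ and $xz^{-1}=a^2(ab)^{-2}$, both arising from $p_1=1+i(x-x^{-1}-y+y^{-1})z^{-1}$ while no other $p_\bullet$ contributes to the coset $N$; hence $u$ is not a scalar multiple of a group element. I expect the one genuine difficulty to be producing $v$: there is no structural reason singling out a particular inverse, so locating it amounts to a search over candidate supports constrained by the four identities above, and once $v$ is in hand everything else is routine verification.
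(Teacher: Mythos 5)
Your structural setup is correct and consistent with the paper: $N=\gen{x,y,z}\cong\Z^3$ has index $4$ with transversal $\{1,a,b,ab\}$, the signs of the conjugation action you list and the rewriting $ba=(x^{-1}yz^{-1})(ab)$ all check out against the faithful affine representation the paper uses, the reduction of $uv=1$ to four Laurent-polynomial identities is the right bookkeeping, the remark that a one-sided inverse suffices (direct finiteness of $\C[P]$) is fine, and non-triviality does follow from the support having more than one element of $N$.

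The genuine gap is that you never produce $v$. Everything in your plan after the words ``exhibit an explicit candidate $v$'' is conditional on having the four coefficients $q_1,q_a,q_b,q_{ab}\in R$ in hand, and your closing paragraph concedes that locating them is ``a search over candidate supports'' which you have not carried out; but exhibiting (or otherwise certifying) an inverse is precisely the content of the theorem, not a routine afterthought. The paper's proof supplies this missing ingredient explicitly: it writes down a two-parameter family $\alpha_1+s\alpha_a a+t\alpha_b b+st\alpha_{ab}ab$ over $\Z[s,t]/\langle s^4+1,\,t^4+1\rangle$ together with an explicit inverse $\beta$ (whose coefficients $\beta_1,\beta_a,\beta_b,\beta_{ab}$ are listed), verifies $\alpha\beta=1$ by computer algebra via the affine representation, and obtains the stated unit by specializing $s=t=\zeta_8$; the discovery of $\beta$ was itself the hard part, requiring the Gr\"obner-basis work described in the paper's Section~3 (the trivial solutions must be excluded by localizing variables, and the solutions force $8$th roots of unity). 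Without the explicit $q_\bullet$, or some alternative argument that the displayed element is invertible, your proposal is a correct verification framework but not a proof of the statement.
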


Surprisingly, the exact same $21$-element subset of the group that supports the non-trivial unit over $\F_2$ given in \cite{Gardam21} also supports a non-trivial unit over $\C$.
(For aesthetic reasons we have taken the original support and multiplied it on the right by $(ab)^{-1}$ and then applied the automorphism $a \mapsto a, b \mapsto a^{-2} b$ to arrive at the support of Theorem~\ref{thm:nontrivial_unit}.)
We have not succeeded in finding a non-trivial unit in $\Z[P]$ but note that the coefficients in the theorem are at least algebraic integers.

\begin{proof}[Proof of Theorem~\ref{thm:nontrivial_unit}]
    This is readily verified using computer algebra.
    There is actually a $2$-parameter family of solutions in terms of primitive 8th roots of unity.
    Let $R = \Z[s, t] / \langle s^4 + 1, t^4 + 1 \rangle$.
    Then setting
    \begin{align*}
        \alpha_1 &= 1 + t^2 x z^{-1} - t^2 x^{-1} z^{-1} - s^2 y z^{-1} + s^2 y^{-1} z^{-1} \\
        \alpha_a &= -s^2 x^{-1} + 1 - x^{-1} y^{-1} z + s^2 y z \\
        \alpha_b &= x^{-1} y^{-1} z - t^2 x z + t^2 y^{-1} z^2 - z^2 \\
        \alpha_{ab} &= -t^2 x^{-1} z^{-1}  -s^2 y z^{-1} + s^2 t^2 z^{-1} + x^{-1} y z^{-1} \\
        &\phantom{=} + s^2 x^{-1} z^{-2} + t^2 y z^{-2} - z^{-2} - s^2 t^2 x^{-1} y z^{-2}
    \end{align*}
    gives a unit $\alpha_1 + s \alpha_a a + t \alpha_b b + s t \alpha_{ab} ab$ in $R[P]$ whose inverse $\beta$ is defined analogously in terms of
    \begin{align*}
        \beta_1 &= 1 + t^2 x^{-1} z - t^2 x z + s^2 y z -s^2 y^{-1} z\\
        \beta_a &= -1 + s^2 x^{-1} -s^2 y z + x^{-1} y^{-1} z \\
        \beta_b &= -x^{-1} y^{-1} z + t^2 x z -t^2 y^{-1} z^2 + z^2 \\
        \beta_{ab} &= s^2 t^2 x^{-1} y + 1 - t^2 y - s^2 x^{-1} \\
        &\phantom{=} -x^{-1} y z -s^2 t^2 z + s^2 y z + t^2 x^{-1} z.
    \end{align*}
    Sample \texttt{sage} code verifying this is available at the repository of accompanying code to this paper \cite{gardam_2024_14008425} (and included as an ancillary file to the arXiv version of this paper).
    Specializing $s = t = \zeta_8$ gives the unit of the theorem statement.
    The group $P$ arises as a group of affine isometries of Euclidean space $\R^3$ and thus can be conveniently implemented using a faithful representation, namely \[
        a \mapsto \begin{pmatrix}
            1 & 0 & 0 & 1 \\
            0 & -1 & 0 & 1 \\
            0 & 0 & -1 & 0 \\
            0 & 0 & 0 & 1 \\
        \end{pmatrix},
        \quad
        b \mapsto \begin{pmatrix}
            -1 & 0 & 0 & 0 \\
            0 & 1 & 0 & 1 \\
            0 & 0 & -1 & 1 \\
            0 & 0 & 0 & 1 \\
        \end{pmatrix}.
    \] Indeed, as computed in \cite{Gardam21} by hand from the group presentation, the group $P$ is torsion-free and the index-4 subgroup $\gen{x, y, z}$ is isomorphic to $\Z^3$ and faithfulness on this subgroup is seen immediately.
\end{proof}

Since the unit has coefficients in $\Z[\zeta_8]$, it yields units in characteristic~$p$ for \emph{all} primes $p$ and not simply for all but finitely many $p$.
To be precise:

\begin{cor}
    The $21$-element set of the theorem supports non-trivial units over $\F_{p^2}$ for any prime $p$, or $\F_p$ if $p = 2$ or $p \equiv 1 \mod 8$.
\end{cor}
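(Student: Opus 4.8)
The plan is to reduce the two-parameter unit over $R = \Z[s,t]/\langle s^4+1, t^4+1 \rangle$ from the proof of Theorem~\ref{thm:nontrivial_unit} modulo $p$. Write $\alpha = \alpha_1 + s\alpha_a a + t\alpha_b b + st\alpha_{ab}ab \in R[P]$ for that unit and $\beta \in R[P]$ for its inverse. The first step is to locate, in a small field $\F$ of characteristic $p$, an element $\omega$ with $\omega^4 = -1$. For $p = 2$ take $\omega = 1 \in \F_2$, since $X^4+1 = (X+1)^4$ over $\F_2$. For $p$ odd, every odd square is $\equiv 1 \pmod 8$, so $8$ divides $p^2-1$, which is the order of the cyclic group $\F_{p^2}^\times$; hence $\F_{p^2}^\times$ contains an element $\omega$ of order $8$, and then $\omega^4$ is the unique element of order $2$ in the field, namely $-1$. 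If moreover $p \equiv 1 \pmod 8$ then $8$ divides $p-1$ and such an $\omega$ already lives in $\F_p$.

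Given such an $\F$ and $\omega$, the assignment $s, t \mapsto \omega$ defines a ring homomorphism $\phi \colon R \to \F$, because the defining relators $s^4+1$ and $t^4+1$ both map to $\omega^4+1 = 0$. Extending $\phi$ coefficientwise yields a ring homomorphism $R[P] \to \F[P]$, which I also call $\phi$. Applying $\phi$ to the identity $\alpha\beta = 1$ in $R[P]$ gives $\phi(\alpha)\phi(\beta) = 1$, so $\phi(\alpha)$ is a unit in $\F[P]$ with inverse $\phi(\beta)$; this step is automatic and needs no separate verification.

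It then remains to check that $\phi(\alpha)$ is non-trivial and supported on exactly the prescribed $21$-element set. The group elements occurring in $\alpha$ are precisely the $21$ elements displayed in Theorem~\ref{thm:nontrivial_unit}; they are distributed among the four cosets of the index-$4$ subgroup $\gen{x,y,z} \cong \Z^3$ with representatives $1, a, b, ab$, and within each coset distinctness is immediate, so the $21$ elements are pairwise distinct in $P$. Consequently there is no cancellation in forming $\alpha$ and each of the $21$ coefficients is a single signed monomial in $s^2$ and $t^2$; under $\phi$ each maps to one of $\pm 1$ or $\pm\omega^2$, all of which are nonzero in $\F$ since $\omega$ is a unit. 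Hence $\phi(\alpha)$ is supported on exactly those $21$ elements, and as $21 > 1$ it is not a scalar multiple of a single group element, i.e.\ it is a non-trivial unit. When $p = 2$ or $p \equiv 1 \pmod 8$ the same $\phi(\alpha)$ lives over $\F_p$, giving the refinement.

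I expect no genuine obstacle: the whole argument is base change along $R \to \F$. The only points needing any care are the elementary observation that $X^4+1$ acquires a root already in $\F_{p^2}$ (and not merely in some larger extension) for every prime $p$, and the bookkeeping that reduction merges none of the $21$ monomials — but the latter is immediate from the distinctness of the supporting group elements already recorded in Theorem~\ref{thm:nontrivial_unit}.
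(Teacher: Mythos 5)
Your proof is correct and follows essentially the same route as the paper: specialize the two-parameter unit over $R = \Z[s,t]/\langle s^4+1, t^4+1\rangle$ by sending $s,t$ to a root of $X^4+1$, which exists in $\F_{p^2}$ (resp.\ $\F_p$) because $\F_{p^k}^\times$ is cyclic of order $p^k-1$ and $8 \mid p^2-1$ for odd $p$, with $p=2$ handled by $X^4+1=(X+1)^4$. Your additional check that no coefficient vanishes under the specialization (each being $\pm$ a power of a root of unity) is a worthwhile detail the paper leaves implicit.
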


\begin{proof}
    The only requirement on the field is that there be a root of $t^4 + 1$.
    The corollary follows as $\F_{p^k}^\times \cong \Z / (p^k - 1) \Z$.
\end{proof}

We now make a few related remarks.
These are phrased in the generality of the unit $\alpha \in R[P]$ constructed in the proof of Theorem~\ref{thm:nontrivial_unit} rather than just one of its images in $\C[P]$.

\begin{rmk}
    \label{remark:symmetry}
The unit $\alpha$ is \emph{symmetric} and \emph{twisted unitary} in the following sense, as noted by Bartholdi for positive characteristic units \cite{Bartholdi23}.
Let $\phi_0 \colon a \mapsto a^{-1}, b \mapsto b^{-1}$ and $\phi_1 \colon a \mapsto a, b \mapsto b^{-1}$ be automorphisms of $P$ and let $\chi_0 \colon a \mapsto -s^2, b \mapsto -t^2$ and $\chi_1 \colon a \mapsto s^2, b \mapsto -1$ be homomorphisms $P \to R^\times$.
A group automorphism extends to a group ring automorphism and a character $\chi \colon P \to R^\times$ induces a \emph{gauge automorphism} of $R[P]$ that extends $g \mapsto \chi(g) g$.
Putting them together, the automorphisms of $R[P]$ defined by \[
    \theta_0(\sum_g \lambda_g g) = \sum_g \chi_0(g) \lambda_g \phi_0(g)
\] and \[
    \theta_1(\sum_g \lambda_g g) = \sum_g \chi_1(g) \lambda_g \phi_1(g)
\]
satisfy $\theta_0(\alpha) = \alpha$ and $\theta_1(\alpha)^* = \alpha^{-1}$, that is, $\alpha$ is $\theta_0$-symmetric and $\theta_1$-unitary.
Here we write $^* \colon R[P] \to R[P]$ for the anti-involution extending $g \mapsto g^{-1}$.
\end{rmk}

\begin{rmk}
    \label{remark:almost_integral}
    There is a homomorphism $\rho \colon P \to R^\times / \{\pm 1\}$ defined by mapping $a \mapsto \{\pm s\}, b \mapsto \{\pm t\}$, which has image $\gen{s, t} / \{\pm 1\} \cong \Z/4 \oplus \Z/4$.
    The unit $\alpha \in R[P]$ has the property that it is of the form $\sum \lambda_g g$ where each non-zero coefficient $\lambda_g \in \rho(g)$.
    This property can be thought of as a type of grading.
    It can also be expressed in terms of invariance under the group ring automorphism that applies the ``complex conjugation'' automorphism $R \to R \colon s \mapsto s^{-1}, t \mapsto t^{-1}$ followed by the gauge automorphism corresponding to the character $a \mapsto s^2, b \mapsto t^2$; this allows us to rephrase the symmetry expressed in Remark~\ref{remark:symmetry} in terms of complex conjugation.
    As the abelianization of $P$ is $\Z/4\Z \oplus \Z/4\Z$, we cannot lift $\rho$ to a homomorphism $P \to \Z/8\Z \oplus \Z/4\Z \cong \gen{s, t} \leq R^\times$, which would otherwise allow us to ``untwist'' the unit into an element of $\Z[P]$ via the would-be gauge automorphism corresponding to the would-be character $a \mapsto s^{-1}, b \mapsto t^{-1}$.
\end{rmk}

\begin{rmk}
    The unit $\alpha \in R[P]$ has image $1 \in R[\Z/4 \oplus \Z/4]$ under the ring homomorphism induced by abelianization of $P$.
    This image need not be trivial \emph{a priori}: while $\Z[\Z/4 \oplus \Z/4]$ only has trivial units, $\Z[\zeta_8][\Z/4 \oplus \Z/4]$ has non-trivial units \cite{Higman40}.
\end{rmk}

\section{Finding the solution}

The problem of finding a non-trivial unit in $\C[P]$ resisted many attempts of the author over a period of three years and surely attracted the attention of many others; this problem of course looks easier in hindsight.
A theoretical approach to the problem was pursued in \cite{ClaramuntGrabowski23}, where a criterion for the existence of non-trivial units in $K[P]$ was elaborated.

A very natural idea is to attempt to lift the solution over $\Z/2\Z$ to $\Z/2^n\Z$ for increasing $n$ so as to arrive at a solution over the ring of $2$-adic integers.
The simple obstacle here is that $\Z_2$ has no square root of $-1$ whereas all non-trivial characteristic~$0$ units supported on those $21$-element sets require an 8th root of unity as explained below (there could however be other units over $\Z/2\Z$ for which the $2$-adic approach works).

Finding a unit such that it and its inverse are supported on the corresponding $21$-element sets means solving a large system of quadratic equations in $42$ variables.
Code generating and working with this system is available for the curious reader at the zenodo repository \cite{gardam_2024_14008425}.
Let $g_1, \dots g_{21} \in P$ be the elements of the support of $\alpha$, enumerated in the same order as in Theorem~\ref{thm:nontrivial_unit}, and let $h_1, \dots, h_{21}$ be the support of its inverse $\beta$ as given in the proof.
We wish to solve for $\alpha \beta = 1$ where our variable group ring elements are $\alpha = \sum_{i=1}^{21} u_i g_i$ and $\beta = \sum_{i=1}^{21} v_i h_i$, given in terms of the variables $u_i$ and $v_i$.
This defines a system of $121$ quadratic equations in $\Z[u_1, \dots, u_{21}, v_1, \dots v_{21}]$.
For instance, there are $17$ pairs with $g_i h_j = 1$ and accordingly the equation corresponding to the coefficient of $\alpha \beta$ at the identity is \begin{multline*}
    u_1 v_1 + u_2 v_2 + u_3 v_3 + u_4 v_5 + u_5 v_4 + u_6 v_6 + u_7 v_7 + u_{12} v_{13} + u_{13} v_{12} \\
    + u_{14} v_{17} + u_{15} v_{16} + u_{16} v_{15} + u_{17} v_{14} + u_{18} v_{21} + u_{19} v_{20} + u_{20} v_{19} + u_{21} v_{18} = 1.
\end{multline*}
The other $120$ equations are homogeneous (in fact bi-linear), such as \[
    u_1 v_2 + u_{12} v_{11} + u_{14} v_{19} + u_{17} v_{20} = 0, \quad
 u_1 v_3 + u_{13} v_{10} + u_{15} v_{18} + u_{16} v_{21} = 0
\] and each is the sum of an even number of monomials (as the two $21$-element sets define units in $\F_2[P]$).
At this point any solution can be modified by a scalar $\lambda \in \C^\times$, replacing $u_i$ by $\lambda u_i$ and $v_i$ by $\frac{1}{\lambda} v_i$, which we would like to factor out.
Thus one should assume for example that the units are \emph{normalized} i.e.\ add the equations \begin{equation}
    \label{equation:normalization}
    \tag{$*$}
    \sum_{i=1}^{21} u_i = 1, \quad \sum_{i=1}^{21} v_i = 1.
\end{equation}
This means in particular that the trivial units are a $0$-dimensional set comprising $17$ points.
We have chosen a convenient enumeration of the $42$ elements such that the symmetry at the group level expressed in Remark~\ref{remark:symmetry} manifests itself in the system of equations in the following way: the set of equations is invariant both under swapping $u_i \leftrightarrow v_i$, and under fixing $u_1, v_1$ while swapping \[
    u_2 \leftrightarrow u_3, \, u_4 \leftrightarrow u_5, \, \dots, \, u_{20} \leftrightarrow u_{21}, \, v_2 \leftrightarrow v_3, \, \dots, \, v_{20} \leftrightarrow v_{21}.
\]

After Bartholdi's coherent reformulation of \cite[Lemma 1]{Gardam21} in terms of automorphisms of the group ring \cite{Bartholdi23}, one could attempt to solve the system of quadratic equations over $\C$ by adding additional constraints relating variables with each other according to the automorphisms of $\C[P]$.
As $P$ has abelianization $\Z/4 \oplus \Z/4$ we have the freedom to consider characters taking values in $\{\pm 1, \pm i\}$ and not just $\{\pm 1\}$ as Bartholdi did.
That reduces the number of variables from $42$ to $11$ ($\phi_0$ has precisely one fixpoint in $\supp(\alpha)$, namely $1$), for instance $u_1, u_2, u_4, u_6, \dots, u_{20}$.
It seems to be more efficient to enumerate over the $4^4$ choices of a pair of characters $\chi_0, \chi_1$ than to express them using additional variables (even if some choices do not define an anti-involution $\alpha \mapsto \theta_1 (\alpha)^*$).
The resulting collection of systems of equations can be solved in a matter of seconds for example using \texttt{singular} \cite{DGPS} via \texttt{sage} \cite{sagemath}, even when performing the Gröbner basis computation directly over $\Q$ instead of over $\F_p$ for some large prime $p$.

The automorphisms $\theta_0$, $\theta_1$ of $\C[P]$ are arguably unnatural, as one does not get the desirable property of pairs of elements $\alpha$, $\beta$ satisfying the symmetry as described by \cite[Lemma 1]{Gardam21} that $\alpha \beta$ automatically vanishes outside an index $2$ subgroup of $P$.
Nonetheless, such a trick can only work for virtually abelian groups, whereas one naturally wishes to understand the units of other torsion-free groups.
The author knows one other torsion-free group supporting non-trivial units over $\F_2$, and here we again have symmetry but in an unexpected way, emphasizing the point that \cite[Lemma 1]{Gardam21} is not the end of the story of symmetry for units.
This is presented below in Section~\ref{section:beyond}.

However, it turns out that one can solve the problem without imposing these symmetry constraints, in a more ``brute force'' fashion, using the state of the art software \texttt{msolve} \cite{Berthomieu21}.
The time needed to compute a Gröbner basis is only on the scale of hours but one needs a machine with generous memory\footnote{using \texttt{msolve v0.7.1} with a single thread on a 2.1 GHz Intel processor took $4.5$ hours and $19$ GB of RAM}.
This Gröbner basis itself has limited value, as the computation is performed modulo a large prime $p$ (we fixed the ``random'' prime $1000000007$ for reproducibility), the variety it defines has dimension $0$, and the 3490 basis polynomials are extremely complicated (comprising over $1.8$ million terms with over $0.8$ million different coefficients in $\F_p$!).
As the system has $17$ isolated trivial solutions, this is perhaps not surprising.

We can however avoid the issue of the trivial solutions by ``localizing'' a pair of the variables, that is, introducing new variables as their multiplicative inverses.
This divides the problem of finding a non-trivial unit into $\binom{21}{2}$ cases for the smallest indices $i < j$ such that $u_i, u_j \neq 0$.
For simplicity, we can replace the normalization equations \eqref{equation:normalization} with $u_i = 1$ and then we only need one additional variable $w$ such that $u_j w - 1 = 0$.
\emph{A priori} there could be solutions among the $\binom{21}{2} - 1$ cases where either $u_1 = 0$ or $u_2 = 0$, but these are quickly ruled out either by determining that the Gröbner basis is the trivial basis $[1]$ in each case or (without committing to any characteristic) verifying with a SAT or SMT solver such as \texttt{z3} \cite{DeMouraBjorner2008} that no proper subsets of the two $21$-element sets falsify the \emph{two unique product property} i.e.\ for candidate proper subsets to be the support, the resulting system always contains an equation $u_k v_l = 0$, contradicting the assumption that $g_k$ and $h_l$ are in the support.
Everything thus comes down to the case where $u_1, u_2 \neq 0$.
This system of equations is much easier to solve: we can compute a Gröbner basis with \texttt{msolve} in under 20 seconds (or under 50 minutes with \texttt{singular}).
After doing this, the computed Gröbner basis being non-trivial already tells us that there is a non-trivial solution, at least in large characteristic $p$.
Even better: in a minor miracle, the coefficients are $\pm 1 \in \F_p$ so that it is clear how to lift to a Gröbner basis over $\Q$ and thus extract solutions over $\C$ (of which there are exactly $16$ as can be immediately computed from the Gröbner basis).
By inspection, one quickly realizes that the variables all take values that are $8$th roots of unity and in fact all the solutions can be parametrized in terms of $2$ primitive $8$th roots, as done in the proof of Theorem~\ref{thm:nontrivial_unit}, specializing to $4^2$ different complex solutions.
Thus, modulo the unlikely possibility that $p = 1000000007$ is a bad prime for this system of equations, these $16$ solutions are all the non-trivial solutions over $\C$.

\section{Beyond virtually abelian groups}
\label{section:beyond}

Let \[
    S = \gp{x, y}{(xy)^2 (xy^{-1})^2, (yx)^2 (yx^{-1})^2}
\] be the virtually nilpotent non-unique product group identified in \cite[p.~23]{Soelberg18} (see also \cite{NielsenSoelberg24}), where it is presented as $\gp{a, b}{a^{-1} b^2 a b^2, a^{-2} b a^{-2} b^3}$; this is isomorphic to $S$ via $x \mapsto a, y \mapsto a b^{-1}$.
It has a faithful representation \[
    x \mapsto \begin{pmatrix*}[r]
        -1 &  1 &  0 \\
         0 & -1 &  0 \\
         0 &  0 &  \phantom{-}1
    \end{pmatrix*},
    \quad
    y \mapsto \begin{pmatrix*}[r]
         1 &  1 &  0 \\
         0 & -1 &  1 \\
         0 &  0 & -1
    \end{pmatrix*},
\]
as one can verify by checking (for example with \texttt{GAP} \cite{GAP4}) that $\gen{x^2, y^2}$ is a subgroup of index $16$ isomorphic to the integral Heisenberg group, on which the representation is easily seen to be faithful.
We note that $S$ is torsion-free, which can be proved by writing it as the free product with amalgamation of two Klein bottle subgroups:
\begin{align*}
    & \gp{x, y}{(xy)^2 (xy^{-1})^2, \, (yx)^2 (yx^{-1})^2} \\
    &\cong \gp{x, y}{(xy)^2 (xy^{-1})^2, \, (xy)^2 (yx)^2} \\
    &\cong \gp{a, b, w, x, y}{a^2 b^2, \, x w^2 x, \, a = xy, \, b = x^{-1} y, \, w = yxy} \\
    &\cong \gp{a, b, w, x, y}{a^2 b^2, \, w^2 x^2, \, a = xy, \, b a^{-1} = x^{-2}, \, w a^{-1} = y} \\
    &\cong \gp{a, b, w, x}{a^2 b^2, \, w^2 x^2, \, a^2 = x w, \, b a^{-1} = x^{-2}} \\
    &\cong \gp{a, b}{a^2 b^2} *_{\Z^2} \gp{w, x}{w^2 x^2}
\end{align*}
This means the representation is faithful on all of the group $S$.
From the presentation we conclude that $\phi \colon S \to S \colon x \mapsto y, y \mapsto x^{-1}$ is a homomorphism and thus an order $4$ automorphism.
It is a straightforward computer verification to prove:

\begin{thm}
    The element
    \begin{multline*}
    \nu = x + x^{-1} + y + y^{-1} + x y + x^{-1} y^{-1} + y x^{-1} + y^2 + y^{-1} x + y^{-2} \\
        + x^2 y + x y^{-1} x + x y^{-2} + x^{-2} y^{-1} + x^{-1} y x^{-1} + x^{-1} y^2 + y x y + y^{-1} x^{-1} y^{-1} \\
        + x^2 y^{-1} x + x y x^2 + x^{-2} y x^{-1} + x^{-1} y^{-1} x^{-2} + y x^{-2} y^{-1} + y^{-1} x^2 y \\
        + x^2 y x^2 + x y^{-1} x^2 y + x^{-2} y^{-1} x^{-2} + x^{-1} y x^{-2} y^{-1} + x^2 y^{-1} x^2 y
    \end{multline*}
    of $\F_2[S]$ is a $\phi$-unitary unit, that is, $\nu^{-1} = \phi(\nu)^*$.
\end{thm}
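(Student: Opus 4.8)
The plan is to reduce the assertion $\nu^{-1} = \phi(\nu)^*$ to an explicit finite computation in $\F_2[S]$ and then carry it out. Since $\phi$ is an algebra automorphism of $\F_2[S]$ and $^* \colon \F_2[S] \to \F_2[S]$ is the $\F_2$-linear anti-involution extending $g \mapsto g^{-1}$ (the two commute, as $\phi(g^{-1}) = \phi(g)^{-1}$, so there is no ambiguity in writing $\phi(\nu)^*$), the claim amounts to the pair of identities $\nu \cdot \phi(\nu)^* = 1$ and $\phi(\nu)^* \cdot \nu = 1$. Since $S$ is virtually the integral Heisenberg group, hence amenable, $\F_2[S]$ is directly finite and the second identity is in fact a consequence of the first; but both are finite checks, so nothing is lost by verifying them independently. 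Writing $g_1, \dots, g_{28}$ for the support of $\nu$, one has $\phi(\nu)^* = \sum_{j} \phi(g_j)^{-1}$, so that $\nu \cdot \phi(\nu)^* = \sum_{i,j} g_i\, \phi(g_j)^{-1}$ is a sum of $28^2 = 784$ elements of $S$; the desired identity is precisely that, after reducing coefficients modulo $2$, the identity of $S$ survives with coefficient $1$ and every other element of $S$ cancels.

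First I would make arithmetic in $\F_2[S]$ effective by means of the faithful matrix representation displayed above: two words in $x, y$ represent the same element of $S$ if and only if the corresponding $3 \times 3$ integer matrices agree, which turns the verification into a finite amount of integer matrix multiplication and comparison. Crucially one never enumerates the infinite group $S$, only multiplies out a single fixed finite expression and tests finitely many matrix equalities. Faithfulness of the representation is exactly what was established above, by identifying $\gen{x^2, y^2}$ as a finite-index subgroup isomorphic to the integral Heisenberg group, on which faithfulness is visible, together with torsion-freeness of $S$ coming from its decomposition as the amalgamated free product $\gp{a, b}{a^2 b^2} *_{\Z^2} \gp{w, x}{w^2 x^2}$ of two Klein bottle groups; alternatively one could compute directly with the normal form for the amalgam. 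One also needs $\phi$ to be a well-defined automorphism, i.e. that $x \mapsto y, y \mapsto x^{-1}$ sends each defining relator of $S$ to a consequence of the relators (together with $\phi^4 = \mathrm{id}$ and $\phi^2 \neq \mathrm{id}$, so that $\phi$ has order $4$), all of which was recorded above.

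I do not expect the expansion itself to be an obstacle: once $\phi$ is known to be well defined and the representation known to be faithful, the rest is bookkeeping, which one can run, for example, in \texttt{GAP} \cite{GAP4} from the matrices. The genuinely nontrivial content, exactly as with Theorem~\ref{thm:nontrivial_unit}, does not lie in the verification but in the prior \emph{existence} of such a $\nu$: it was located by a computer search, and the $\phi$-unitarity is the structural constraint that makes that search feasible, pinning the coefficients of the putative inverse to those of $\nu$ and roughly halving the number of free unknowns in the attendant system of quadratic equations over $\F_2$. A reader wanting certainty is invited to reproduce the expansion; a suitable implementation accompanies this paper \cite{gardam_2024_14008425}.
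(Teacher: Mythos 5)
Your proposal is correct and matches the paper's approach: the paper likewise treats this as a straightforward computer verification, carried out via the faithful $3\times 3$ matrix representation of $S$ (whose faithfulness rests on the Heisenberg subgroup of index $16$ and the amalgam decomposition establishing torsion-freeness) and the automorphism $\phi$ read off from the presentation, with accompanying code for the finite expansion. Your extra remarks (checking both $\nu\,\phi(\nu)^* = 1$ and $\phi(\nu)^*\,\nu = 1$, with the aside on direct finiteness) are sound but do not change the route.
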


Thus $\phi^2$ is a non-trivial automorphism that fixes the unit; the symmetry exhibited by $\nu$ and its inverse is order $4$ but isomorphic to $\Z/4$ rather than $\Z/2 \times \Z/2$ as was the case for $P$.

\subsection*{Acknowledgements}

This work was funded by the European Union (ERC, SATURN, 101076148) and the Deutsche Forschungsgemeinschaft (DFG, German Research Foundation) under Project-ID 506523109 (Emmy Noether) and under Germany's Excellence Strategy EXC 2044--390685587 and EXC-2047/1 -- 390685813.
The author gratefully acknowledges the granted access to the Bonna cluster hosted by the University of Bonn.

The Max Planck Institute for Mathematics in the Sciences hosted a stimulating workshop in Leipzig in April 2023 on \emph{Solving hard polynomial systems}.
I thank the organizers and the other participants, especially Georgy Scholten, for helpful discussions.
I also thank Franziska Jahnke and Daniel Windisch for many interesting discussions on model theoretic approaches to proving the existence of units in characteristic zero.

\bibliography{complex-units}{}
\bibliographystyle{alpha}

\end{document}